\newcommand{\bu}{\boldsymbol u}
\newcommand{\bv}{\boldsymbol v}
\newcommand{\bV}{\boldsymbol V}
\newcommand{\bw}{\boldsymbol w}
\newcommand{\bbeta}{\boldsymbol  \eta}
\newcommand{\be}{\boldsymbol e}
\newcommand{\bvar}{\boldsymbol \varphi}
\newcommand{\bU}{\boldsymbol U}
\newcommand{\bff}{\boldsymbol f}
\newtheorem{Theorem}{Theorem}[section]
\newtheorem{remark}[Theorem]{Remark}
\newtheorem{Proof}{{\em Proof:}}
\newenvironment{proof}{\begin{Proof}\rm}{\hfill $\Box$ \end{Proof}}
\title{On the influence of the nonlinear term in the numerical approximation of Incompressible Flows by means
of proper orthogonal decomposition methods}
\author{Bosco
Garc\'{\i}a-Archilla\thanks{Departamento de Matem\'atica Aplicada
II, Universidad de Sevilla, Sevilla, Spain. Research is supported by
Spanish MCINYU under grants PGC2018-096265-B-I00 and PID2019-104141GB-I00 (bosco@esi.us.es)}
  \and Julia Novo\thanks{Departamento de
Matem\'aticas, Universidad Aut\'onoma de Madrid, Spain. Research is supported
by Spanish MINECO
under grants PID2019-104141GB-I00 and VA169P20  (julia.novo@uam.es)}\and
Samuele Rubino\thanks{Department EDAN \& IMUS, Universidad de Sevilla, Spain. Research is supported by
Spanish MCINYU under grant RTI2018-093521-B-C31  (samuele@us.es)}}
\date{\today}
\begin{document}
\maketitle

\begin{abstract}
 We consider proper orthogonal decomposition (POD) methods to approximate the incompressible Navier-Stokes equations. We
 study the case in which one discretization for the nonlinear term is used in the snapshots (that are computed with a full order
 method (FOM)) and a different discretization of the nonlinear term is applied in the POD method. We prove that
 an additional error term appears in this case, compared with the case in which the same discretization of the
 nonlinear term is applied for both the FOM and the POD methods. However, the added term has the same size as
 the error coming from the FOM so that the rate of convergence of the POD method is barely affected. We analyze
  the case in which we add grad-div stabilization to both the FOM and the POD methods because it allows
  to get error bounds with constants independent of inverse powers of the viscosity. We also study the
  case in which no stabilization is added. Some numerical experiments support the theoretical analysis.
 \end{abstract}

\noindent{\bf AMS subject classifications.} 35Q30,  65M12, 65M15, 65M20, 65M60, 65M70,\\ 76B75. \\
\noindent{\bf Keywords.} Navier-Stokes equations, proper orthogonal decomposition, nonlinear term discretization, grad-div stabilization.

\section{Introduction}
The computational cost of direct numerical simulations can be reduced by using reduced order models. The proper orthogonal decomposition (POD) method is based on a reduced basis (snapshots) that are computed by means of a full order method (FOM). 
In this paper, we study the numerical approximation of incompressible flows with POD methods.

We consider the Navier-Stokes equations
\begin{align}
\label{NS} \partial_t\bu -\nu \Delta \bu + (\bu\cdot\nabla)\bu + \nabla p &= \bff &&\text{in }\ (0,T]\times\Omega,\nonumber\\
\nabla \cdot \bu &=0&&\text{in }\ (0,T]\times\Omega,
\end{align}
in a bounded domain $\Omega \subset {\mathbb R}^d$, $d \in \{2,3\}$ with initial condition $\bu(0)=\bu^0$. In~\eqref{NS},
$\bu$ is the velocity field, $p$ the kinematic pressure, $\nu>0$ the kinematic viscosity coefficient,
 and $\bff$ represents the accelerations due to external body forces acting
on the fluid. The Navier-Stokes equations \eqref{NS} must be complemented with boundary conditions. For simplicity,
we only consider homogeneous
Dirichlet boundary conditions $\bu = \boldsymbol 0$ on $\partial \Omega$.

In practical simulations one can apply some given software to compute the snapshots. It could then be the case that a different discretization
is used for the discretization of the nonlinear term in the FOM method and the POD method (the last one being typically implemented by means of a hand-made code instead of an existing one). Our aim in this paper is to study the influence of the use
of different discretizations for the nonlinear terms on the final error bounds of the POD method. 

In \cite{novo_rubino} POD stabilized methods for the Navier-Stokes equations were considered and analyzed for a case in which
the snapshots are based on a non inf-sup stable method and a case in which the snapshots are based on an inf-sup stable method. In the present paper, we consider the second case. Our snapshots are based on an inf-sup stable method and, as in \cite{novo_rubino}, we add
grad-div stabilization to both the FOM and the POD methods. We  analyze the case in which different
discretizations are applied for the nonlinear term. Adding grad-div stabilization we are able to prove error bounds for the method
with constants that do not depend explicitly on inverse powers of the viscosity, although, as usual, may depend on it through the norms
of the theoretical solution.

In the recent preprint \cite{archiv_incon} we have found an study of the consistency of the nonlinear discretization in FOM and 
POD methods. In this reference no stabilization is included neither in the FOM nor in the POD method. The authors of \cite{archiv_incon}
conclude that the use of different discretizations in the nonlinear term yields additional terms that prevent the POD method
from recovering the FOM accuracy. In the present paper, we prove, in agreement with the results in \cite{archiv_incon}, that
there are some additional terms in the error bound of the final POD method coming from the use of different discretizations in the nonlinear terms. We prove that the additional terms have the size of the $L^2$ error in the velocity and the $L^2$ error of the divergence of the velocity of the FOM method. This holds both for the methods with grad-div stabilization and also for the plain methods considered in 
\cite{archiv_incon}. We then conclude that there is no inconsistency in the error bounds using different discretization for the nonlinear terms in the FOM and POD methods. 

Following \cite{archiv_incon}, we analyzed a concrete case in which the so called divergence or skew-symmetric form of the nonlinear term (commonly used in practice) is
used for the FOM method and the EMAC form, see \cite{emac}, is applied for the POD method. The error analysis for any other combination
of discretizations
of the nonlinear terms with analogous properties could be
carried out in a similar way. The EMAC formulation was designed to conserve energy, momentum and angular momentum. Considering
a continuous in time method it is easy to see, as we prove in Section 4, that the effect of grad-div stabilization in a method with EMAC form for the nonlinear term is a lost of kinetic energy. The method maintains however the conservation of the
 momentum and angular momentum. In practice, as stated in \cite{emac_fully}, a fully version of a method with EMAC form for the nonlinear term 
is applied. In \cite{emac_fully} it is proved that using Newton method for the nonlinear term is the option from which more quantities
are still conserved, although the conservation of the kinetic energy is lost. In this sense, adding grad-div stabilization incurres
in the same phenomena so that it seems not to be a hard problem since either kinetic energy or any other properties will be
lost with the EMAC formulation in the fully discrete case. 

The outline of the paper is as follows. In Section 2 we introduce some notation. In Section 3 we state some
preliminaries concerning the POD method. Section 4 is devoted to the error analysis in which one considers
different discretizations for the nonlinear term in the FOM and POD methods. In Section 5 we present some numerical experiments.
We end the paper with some conclusions.
\section{Preliminaries and notation}\label{sec:PN}
The following Sobolev embeddings \cite{Adams} will be used in the analysis: For
$q \in [1, \infty)$, there exists a constant $C=C(\Omega, q)$ such
that
\begin{equation}\label{sob1}
\|v\|_{L^{q'}} \le C \| v\|_{W^{s,q}}, \,\,\quad
\frac{1}{q'}
\ge \frac{1}{q}-\frac{s}{d}>0,\quad q<\infty, \quad v \in
W^{s,q}(\Omega)^{d}.
\end{equation}
The following inequality can be found in  \cite[Remark 3.35]{John}
\begin{eqnarray}\label{diver_vol}
\|\nabla \cdot \bv\|_0\le \|\nabla   \bv \|_0,\quad \bv\in H_0^1(\Omega)^d,
\end{eqnarray}
{where, here and in the sequel, we use the notation $\|\cdot\|_j$ for $\|\cdot\|_{W^{j,2}}=\|\cdot\|_{H^j}$.}
Let us denote by $Q=L_0^2(\Omega)=\left\{q\in L^2(\Omega)\mid (q,1)=0\right\}$.
Let $\mathcal{T}_{h}=(\tau_j^h,\phi_{j}^{h})_{j \in J_{h}}$, $h>0$ be a family of partitions of $\overline\Omega$, where $h$ denotes the maximum diameter of the elements $\tau_j^h\in \mathcal{T}_{h}$, and $\phi_j^h$ are the mappings from the reference simplex $\tau_0$ onto $\tau_j^h$.
We shall assume that the partitions are shape-regular and quasi-uniform. 
{We define the following finite element spaces
\begin{eqnarray*}
Y_h^l&=& \left\{v_h\in C^0(\overline\Omega)\mid {v_h}_{\mid_K}\in {\Bbb P}_l(K),\quad \forall K\in \mathcal T_h\right\}, \ l\ge 1,\nonumber\\
{\boldsymbol Y}_h^l&=&(Y_h^l)^d,\quad {\boldsymbol X}_h^l={\boldsymbol Y}_h^l\cap H_0^1(\Omega)^d, \nonumber\\
Q_h^l&=&Y_h^l\cap L_0^2(\Omega).
\end{eqnarray*}
\begin{eqnarray}\label{eq:V}
{\boldsymbol V}_{h,l}={\boldsymbol X}_h^l\cap \left\{ {\boldsymbol \chi}_{h} \in H_0^1(\Omega)^d \mid
(q_{h}, \nabla\cdot{\boldsymbol\chi}_{h}) =0  \quad\forall q_{h} \in Q_{h}^{l-1}
\right\},\quad l\geq 2.
\end{eqnarray}}If the family of
meshes is quasi-uniform then  the following inverse
inequality holds for each {$\bv_{h} \in Y_{h}^{l}$}, see e.g., \cite[Theorem 3.2.6]{Cia78},
\begin{equation}
\label{inv} \| \bv_{h} \|_{W^{m,p}(K)} \leq c_{\mathrm{inv}}
h_K^{n-m-d\left(\frac{1}{q}-\frac{1}{p}\right)}
\|\bv_{h}\|_{W^{n,q}(K)},
\end{equation}
where $0\leq n \leq m \leq 1$, $1\leq q \leq p \leq \infty$, and $h_K$
is the diameter of~$K \in \mathcal T_h$.
Let $l \geq 2$, we consider the  MFE pair known as Hood--Taylor elements \cite{BF,hood0} $({\boldsymbol X}_h^l, Q_{h}^{l-1})$.

For these elements a uniform inf-sup condition is satisfied (see \cite{BF}), that is, there exists a constant $\beta_{\rm is}>0$ independent of the mesh size $h$ such that
\begin{equation}\label{lbbh}
 \inf_{q_{h}\in Q_{h}^{l-1}}\sup_{\bv_{h}\in{\boldsymbol X}_h^l}
\frac{(q_{h},\nabla \cdot \bv_{h})}{\|\bv_{h}\|_{1}
\|q_{h}\|_{L^2/{\mathbb R}}} \geq \beta_{\rm{is}}.
\end{equation}
We will denote by $P_{Q_h}$ the $L^2$ orthogonal projection onto $Q_h^{l-1}$. The following bound holds
\begin{eqnarray}\label{eq:propre}
\|q-P_{Q_h}q\|_0\le C h^l \|q\|_l,\quad \forall q\in H^l(\Omega).
\end{eqnarray}
As a direct method, as in \cite[Section 5]{novo_rubino}, we consider a Galerkin method with grad-div stabilization and 
for simplicity in the analysis we consider the implicit Euler method in time. Let us fix $T>0$ and $M>0$ and take $\Delta t=T/M$.
The method reads as follows: given $\bu_h^0\approx \bu_0$ 
find $(\bu_h^n,p_h^n)\in {\boldsymbol X}_h^l\times Q_h^{l-1}$ for $n\ge 1$ such that 
\begin{eqnarray}\label{eq:gal_grad_div}
&&\left(\frac{\bu_h^{n}-\bu_h^{n-1}}{\Delta t },\bv_h\right)+\nu(\nabla \bu_h^n,\nabla \bv_h)+b_h(\bu_h^n,\bu_h^n,\bv_h)-(p_h^n,\nabla \cdot \bv_h)
\nonumber\\
&&\quad+
\mu(\nabla \cdot\bu_h^n,\nabla \cdot \bv_h)=({\boldsymbol f^n},\bv_h) \quad \forall \bv_h\in {\boldsymbol X}_h^l,\\
&&(\nabla \cdot \bu_h^n,q_h)=0 \quad \forall q_h\in Q_h^{l-1},\nonumber
\end{eqnarray}
where $\mu$ is the positive grad-div stabilization parameter.

For the discretization of the nonlinear term we consider the following form, the so called divergence or
skew-symmetric form,
\begin{equation}\label{nonli_div}
b_h(\bu,\bv,\bw)=((\bu\cdot \nabla \bv),\bw)+\frac{1}{2}((\nabla \cdot \bu)\bv,\bw).
\end{equation}

It is well-known that considering the discrete divergence-free space $\bV_{h,l}$ we can remove the pressure from \eqref{eq:gal_grad_div} since
$\bu_h^n\in \bV_{h,l}$ satisfies for $n\ge 1$
\begin{eqnarray}\label{eq:gal_grad_div2}
&&\left(\frac{\bu_h^{n}-\bu_h^{n-1}}{\Delta t },\bv_h\right)+\nu(\nabla \bu_h^n,\nabla \bv_h)+b_h(\bu_h^n,\bu_h^n,\bv_h)
\nonumber\\
&&\quad+
\mu(\nabla \cdot\bu_h^n,\nabla \cdot \bv_h)=({\boldsymbol f^n},\bv_h), \quad \forall \bv_h\in {\bV}_{h,l}.
\end{eqnarray}
For this method the following bound holds, see \cite{NS_grad_div}
\begin{eqnarray}\label{eq:cota_grad_div}
\|\bu^n-\bu_h^n\|_0+ \left(\mu\sum_{j=1}^M \Delta t \|\nabla \cdot (\bu^n-\bu_h^n)\|_0^2\right)^{1/2}\le C(\bu,p,l+1) \left(h^{l}+\Delta t \right),
\end{eqnarray}
for $1\le n\le M$, where the constant $C(\bu,p,l+1)$ depends on $\|\bu\|_{L^\infty(H^{l+1})}$, $\left(\int_0^T\|\bu_t\|^2_l\right)^{1/2}$,
$\left(\int_0^T\|\bu_{tt}\|^2_0\right)^{1/2}$ and $\|p\|_{L^{\infty}(H^l)}$ but does not depend explicitly on inverse powers of $\nu$.

For the plain Galerkin method the following bound holds where the error constants depend explicitly on inverse powers of $\nu$
\begin{eqnarray}\label{eq:cota_sin_grad_div}
\|\bu^n-\bu_h^n\|_0+h\|\nabla (\bu^n-\bu_h^n)\|_0\le C(\bu,p,\nu^{-1},l+1) \left(h^{l+1}+\Delta t \right).
\end{eqnarray} 
\section{Proper orthogonal decomposition}\label{sec:POD}
We will consider a proper orthogonal decomposition (POD) method.
As for the FOM we fix $T>0$ and $M>0$ and take $\Delta t=T/M$. We consider the following space
$$
{\cal \bU}=<\bu_h^1,\ldots,\bu_h^M>,
$$
where $\bu_h^j=\bu_h(\cdot,t_j)$.
Let $d_v$ be the dimension of the space $\cal \bU$.

Let $K_v$ be the correlation matrix corresponding to the snapshots $K_v=((k_{i,j}^v))\in {\mathbb R}^{M\times M}$,
 where
$$
k_{i,j}^v=\frac{1}{M}(\bu_h^i,\bu_h^j),
$$
and $(\cdot,\cdot)$ is the inner product in $L^2(\Omega)^d$. Following \cite{kunisch} we denote by
$ \lambda_1\ge  \lambda_2,\ldots\ge \lambda_{d_v}>0$ the positive eigenvalues of $K_v$ and by
$\bv_1,\ldots,\bv_{d_v}\in {\mathbb R}^{M}$ the associated eigenvectors.  Then, the (orthonormal) POD bases are given by
\begin{eqnarray}\label{lachi}
\bvar_k=\frac{1}{\sqrt{M}}\frac{1}{\sqrt{\lambda_k}}\sum_{j=1}^{M} v_k^j \bu_h(\cdot,t_j),\end{eqnarray}
where $v_k^j$ is the $j$-th component of the eigenvector $\bv_k$ and the following error formulas hold, see \cite[Proposition~1]{kunisch}
\begin{eqnarray}\label{eq:cota_pod_0}
\frac{1}{M}\sum_{j=0}^{M}\left\|\bu_h^j-\sum_{k=1}^r(\bu_h^j,\bvar_k)\bvar_k\right\|_{0}^2&\le&\sum_{k=r+1}^{d_v}\lambda_k.
\end{eqnarray}
We will denote by $S^v$ the stiffness matrix for the POD basis: $S^v=((s_{i,j}^v))\in {\mathbb R}^{d_v\times d_v}$, $s_{i,j}^v=(\nabla \bvar_i,\nabla \bvar_j)$. In that case, for any $\bv \in {\cal \bU}$,
the following inverse inequality holds, see \cite[Lemma 2]{kunisch}
\begin{equation}\label{eq:inv_S}
||\nabla \bv ||_0\le \sqrt{\|S^v\|_2}\|\bv\|_0.
\end{equation}

In the sequel we will denote by
$$
{\cal \bU}^r=<\bvar_1,\bvar_2,\ldots,\bvar_r>,
$$
and by $P_r^v$,  the $L^2$-orthogonal projection onto ${\cal \bU}^r$.

\subsection{A priori bounds for the orthogonal projection onto ${\cal \bU}^r$}
In \cite[Section 3.1]{novo_rubino} some a priori bounds for the FOM and for the orthogonal projection were obtained
assuming for the solution of \eqref{NS} the same regularity needed to prove \eqref{eq:cota_grad_div}.
For $0\le j\le M$ the following bound holds (see \cite[(31)]{novo_rubino})
\begin{eqnarray}\label{eq:uh_infty}
\|\bu_h^j\|_\infty\le
 C_{\bu,{\rm inf}}.
\end{eqnarray}
For $1\le j\le M$ the following bounds hold (see \cite[(39), (40), (41)]{novo_rubino})
\begin{eqnarray}\label{eq:cotaPlinf2}
\|P_r^v \bu_h^j\|_\infty&\le& C_{\rm inf},\\
\label{eq:cotanablaPlinf}
\|\nabla P_r^v \bu_h^j\|_\infty&\le& C_{1,\rm inf},
\label{eq:cotanablaPld}\\
\|\nabla P_r^v\bu^j\|_{L^{2d/(d-1)}}&\le& C_{\rm ld}.
\end{eqnarray}
\section{The POD method}\label{sec:NA}
We now consider the  grad-div POD model. As for the full order model, for simplicity in the error analysis, we also use
the implicit Euler method as time integrator. Taking an initial approximation $\bu_r^0\in  {\cal \bU}^r$, for $n\ge 1$, find $\bu_r^n\in {\cal \bU}^r$ such that
\begin{eqnarray}\label{eq:pod_method2}
&&\left(\frac{\bu_r^{n}-\bu_r^{n-1}}{\Delta t},\bvar\right)+\nu(\nabla \bu_r^n,\nabla\bvar)+b_{pod}(\bu_r^n,\bu_r^n,\bvar)
+\mu(\nabla \cdot\bu_r^n,\nabla \cdot\bvar)\nonumber\\
&&\quad=(\bff^{n},\bvar),\quad \forall \bvar\in {\cal \bU}^r,
\end{eqnarray}
where we observe that  ${\cal \bU}^r\subset \bV_{h,l}$ so that there is no pressure approximation in \eqref{eq:pod_method2}.
For the discretization of the nonlinear term we consider the EMAC form
\begin{equation}\label{nonli_emac}
b_{pod}(\bu,\bv,\bw)=(2D(\bu)\bv,\bw)+((\nabla \cdot \bu)\bv,\bw),
\end{equation}
where the deformation tensor is
$$
D(\bu)=\frac{1}{2}\left(\nabla \bu+(\nabla \bu)^T\right),
$$
and the second term in \eqref{nonli_emac} is added to ensure the property
\begin{equation}\label{cancel}
b_{pod}(\bv,\bv,\bv)=0.
\end{equation}
The EMAC form is based on the identity
\begin{equation}\label{eq:ide}
(\bu\cdot \nabla)\bu=2D(\bu)\bu-\frac{1}{2}\nabla |\bu|^2.
\end{equation}
From \eqref{eq:ide} it can be observed that the EMAC form of the nonlinear term implies a modification of the pressure, i.e., in
a velocity-pressure formulation of a method with the EMAC form for the nonlinear term the pressure approximation converges to
$p-\frac{1}{2}|\bu|^2$ instead of to the original pressure $p$ in \eqref{NS}.

Considering for simplicity the continuous in time case, i.e., the method:
\begin{eqnarray*}
\left(\bu_{r,t},\bvar\right)+\nu(\nabla \bu_r,\nabla\bvar)+b_{pod}(\bu_r,\bu_r,\bvar)
+\mu(\nabla \cdot\bu_r,\nabla \cdot\bvar)=(\bff,\bvar),
\end{eqnarray*}
taking $\bvar=\bu_r$ and assuming as in \cite{emac}, \cite{emac_fully}, $\bff=0$ and $\nu=0$ one gets
applying \eqref{cancel}
\begin{eqnarray}\label{lost}
\frac{d}{dt}\|\bu_r\|_0^2=-\mu\|\nabla \cdot\bu_r\|_0^2.
\end{eqnarray}
This means that the grad-div stabilization produces a lost in the kinetic energy. As stated in the introduction, considering
a fully discrete method with Newton discretization for the nonlinear term one has also the effect of a lost of kinetic energy, see
\cite{emac_fully}.
On the other hand, it is easy to prove that the grad-div term has no negative effect in the conservation of linear and angular momentum
so that both are still conserved. To prove this, one can argue as in \cite[Theorem 3.3]{Ingimarson+2021} and observe that the test functions
used to achieve the conservation of linear and angular momentum have divergence zero so that the grad-div term does not affect the
proof.
\subsection{Error analysis of the method}
Denoting by
$$
\bbeta_h^n=P_r^v\bu_h^n-\bu_h^n,
$$
it is easy to get
\begin{eqnarray}\label{eq:prov_prop2}
&&\left(\frac{P_r^v \bu^{n}_h-P_r^v \bu^{n-1}_h}{\Delta t},\bvar\right)+\nu(\nabla  P_r^v \bu^n_h,\nabla\bvar)+b_h(P_r^v \bu^n_h,P_r^v \bu^n_h,\bvar)
\nonumber\\
&&\quad+\mu(\nabla \cdot P_r^v \bu^n_h,\nabla \cdot\bvar)\nonumber\\
&&=(\bff^{n},\bvar)+\nu(\nabla \bbeta_h^n,\nabla \bvar)+\mu(\nabla \cdot\bbeta_h^n,\nabla \cdot \bvar)\\
&&\quad+b_h(P_r^v\bu^n_h,P_r^v \bu^n_h,\bvar)-b_h(\bu^{n}_h,\bu^{n}_h,\bvar),\quad \forall \bvar\in {\cal \bU}^r.\nonumber
\end{eqnarray}
Subtracting \eqref{eq:prov_prop2} from \eqref{eq:pod_method2} and denoting   by
$$\be_r^n=\bu_r^n-P_r^v\bu_h^n$$
we get $\forall \bvar\in {\cal U}^r$
\begin{eqnarray}\label{eq:er1}
&&\left(\frac{\be_r^n-\be_r^{n-1}}{\Delta t },\bvar\right)+\nu(\nabla \be_r^n,\nabla \bvar)+\mu(\nabla \cdot \be_r^n,\nabla \cdot \bvar)\\
&&=\left(b_h(\bu_h^n,\bu_h^n,\bvar)-b_{pod}(\bu_r^n,\bu_r^n,\bvar)\right)-\nu(\nabla \bbeta_h^n,\nabla\bvar)-\mu(\nabla \cdot \bbeta_h^n,\nabla \cdot\bvar).\nonumber
\end{eqnarray}
Taking $\bvar=\be_r^n$ we obtain
\begin{eqnarray}\label{eq:error2}
&&\frac{1}{2\Delta t}\left(\|\be_r^n\|_0^2-\|\be_r^{n-1}\|_0^2\right)+\nu\|\nabla \be_r^n\|_0^2
+\mu\|\nabla \cdot \be_r^n\|_0^2\\
&&\quad\le\left(b_h(\bu_h^n,\bu_h^n,\be_r^n)-b_{pod}(\bu_r^n,\bu_r^n,\be_r^n)\right)-\nu(\nabla \bbeta_h^n,\nabla\be_r^n)-\mu(\nabla \cdot \bbeta_h^n,\nabla \cdot\be_r^n)\nonumber\\
&&\quad =I+II+III.\nonumber
\end{eqnarray}
We will bound the first term on the right-hand side above.
Arguing as in \cite{archiv_incon} we get
\begin{eqnarray}\label{eq:incon1}
|I|&\le& \left|b_{pod}(\bu_h^n,\bu_h^n,\be_r^n)-b_{pod}(\bu_r^n,\bu_r^n,\be_r^n)\right|\nonumber\\
&&\quad+\left|b_h(\bu_h^n,\bu_h^n,\be_r^n)-b_{pod}(\bu_h^n,\bu_h^n,\be_r^n)\right|.
\end{eqnarray}
To bound the first term on the right-hand side of \eqref{eq:incon1} we argue as in \cite{ols_reb} (see also \cite{review}).
We first observe that
\begin{eqnarray}\label{52}
&&b_{pod}(\bu_h^n,\bu_h^n,\be_r^n)-b_{pod}(\bu_r^n,\bu_r^n,\be_r^n)=
b_{pod}(-\bbeta_h^n,\bu_h^n,\be_r^n)\nonumber\\
&&\quad+b_{pod}(P_r^v\bu_h^n,-\bbeta_h^n,\be_r^n)-b_{pod}(P_r^v\bu_h^n,\be_r^n,\be_r^n)
\nonumber\\
&&\quad-b_{pod}(\be_r^n,P_r^v\bu_h^n,\be_r^n)-b_{pod}(\be_r^n,\be_r^n,\be_r^n).
\end{eqnarray}
The last term on the right-hand side of \eqref{52} vanishes due to the property \eqref{cancel}. The expression of the fourth one
is
$$
b_{pod}(\be_r^n,P_r^v\bu_h^n,\be_r^n) = ((P_r^v\bu_h^n\cdot\nabla)\be_r^n,\be_r^n)+((\be_r^n\cdot \nabla )\be_r^n,P_r^v \bu_h^n) + ((\nabla \cdot \be_r^n)\be_r^n,P_r^v\bu_h^n).
$$
Integration by parts reveals that the first term on the right-hand side above equals $-\frac{1}{2}((\nabla \cdot P_r^v\bu_h^n)\be_r^n,\be_r^n)$, whilst the other two equal $-((\be_r^n\cdot \nabla )P_r^v\bu_h^n,\be_r^n)$, so that
$$
b_{pod}(\be_r^n,P_r^v\bu_h^n,\be_r^n) =-\frac{1}{2}((\nabla \cdot P_r^v\bu_h^n)\be_r^n,\be_r^n) 
-((\be_r^n\cdot \nabla )P_r^v\bu_h^n,\be_r^n).
$$
Since the third term on the right-hand side of~\eqref{52} is
$$
b_{pod}(P_r^v\bu_h^n,\be_r^n,\be_r^n)= 2((\be_r^n\cdot \nabla )P_r^v\bu_h^n,\be_r^n) + ((\nabla \cdot P_r^v\bu_h^n)\be_r^n,\be_r^n),
$$
we see that the last three terms on the right-hand side of~\eqref{52} add to 
$-((\be_r^n\cdot \nabla )P_r^v\bu_h^n,\be_r^n)-\frac{1}{2}((\nabla \cdot P_r^v\bu_h^n)\be_r^n,\be_r^n)$, so that~\eqref{52} can be written as
\begin{eqnarray}\label{53}
&&b_{pod}(\bu_h^n,\bu_h^n,\be_r^n)-b_{pod}(\bu_r^n,\bu_r^n,\be_r^n)=
b_{pod}(-\bbeta_h^n,\bu_h^n,\be_r^n)\\
&&\quad+b_{pod}(P_r^v\bu_h^n,-\bbeta_h^n,\be_r^n)-\frac{1}{2}((\nabla \cdot P_r^v\bu_h^n)\be_r^n,\be_r^n)
-(D(P_r^v\bu_h^n)\be_r^n,\be_r^n).\nonumber
\end{eqnarray}
For the first term on the right-hand side of \eqref{53}, applying  \eqref{eq:uh_infty},
we obtain
\begin{eqnarray}\label{53_uno}
b_{pod}(-\bbeta_h^n,\bu_h^n,\be_r^n)\le C\|\bbeta_h^n\|_{1}\|\bu_h^n\|_{\infty}\|\be_r^n\|_0
\le CC_{\bu,{\rm inf}}\|\bbeta_h^n\|_{1}\|\be_r^n\|_0.
\end{eqnarray}
Applying H$\ddot{\rm o}$lder's inequality, \eqref{eq:cotanablaPld} and Sobolev embedding \eqref{sob1} we get for the second term
on the right-hand side of \eqref{53}
\begin{eqnarray}\label{53_dos}
b_{pod}(P_r^v\bu_h^n,-\bbeta_h^n,\be_r^n)
&\le& 2\|D(P_r^v\bu_h^n)\|_{L^{2d/(d-1)}}\|\bbeta_h^n\|_{L^{2d}}\|\be_r^n\|_0\nonumber\\
&&\quad+\|\nabla \cdot P_r^v\bu_h^n\|_{L^{2d/(d-1)}}\|\bbeta_h^n\|_{L^{2d}}\|\be_r^n\|_0\nonumber\\
&\le& C \|\nabla P_r^v\bu_h^n\|_{L^{2d/(d-1)}}\|\bbeta_h^n\|_{L^{2d}}\|\be_r^n\|_0\nonumber\\
&\le& C C_{\rm ld}\|\bbeta_h^n\|_{1}\|\be_r^n\|_0.
\end{eqnarray}
Finally, for the last two terms on the right-hand side of \eqref{53}, applying \eqref{eq:cotanablaPlinf}, we get
\begin{eqnarray}\label{53_tres}
-\frac{1}{2}((\nabla \cdot P_r^v\bu_h^n)\be_r^n,\be_r^n)
-(D(P_r^v\bu_h^n)\be_r^n,\be_r^n)\le C C_{1,\rm inf}\|\be_r^n\|_0^2.
\end{eqnarray}
From \eqref{53}, \eqref{53_uno}, \eqref{53_dos} and \eqref{53_tres} and Poincar\'e inequality we finally reach
\begin{eqnarray}\label{eq:incon11}
&&|b_{pod}(\bu_h^n,\bu_h^n,\be_r^n)-b_{pod}(\bu_r^n,\bu_r^n,\be_r^n)|
\nonumber\\
&&\quad\le (1+CC_{1,\rm inf})\|\be_r^n\|_0^2+C( C^2_{\bu,{\rm inf}}+C^2_{\rm ld})\|\nabla \bbeta_h^n\|_0^2.
\end{eqnarray}

For the second term on the right-hand side of \eqref{eq:incon1},
we recall that
\begin{align*}
b_h(\bu_h^n,\bu_h^n,\be_r^n) & = (\bu_h^n\cdot\nabla \bu_h^n,\be_r^n)+ \frac{1}{2}((\nabla \cdot \bu_h^n)\bu_h^n,\be_r^n).
\\
b_{pod}(\bu_h^n,\bu_h^n,\be_r^n) & = (\bu_h^n\cdot\nabla \bu_h^n,\be_r^n)+ (\be_r^n\cdot\nabla \bu_h^n,\bu_h^n)+
((\nabla \cdot \bu_h^n)\bu_h^n,\be_r^n).
\end{align*}
We notice that both
$b_h(\bu_h^n,\bu_h^n,\be_r^n)$ and $b_{pod}(\bu_h^n,\bu_h^n,\be_r^n)$ share the term
$(\bu_h^n\cdot\nabla \bu_h^n,\be_r^n)$, and both also have the term $((\nabla \cdot \bu_h^n)\bu_h^n,\be_r^n)$ but with a factor~$1/2$ in the second one,  so that we have
$$
b_h(\bu_h^n,\bu_h^n,\be_r^n)-b_{pod}(\bu_h^n,\bu_h^n,\be_r^n) = -(\be_r^n\cdot\nabla \bu_h^n,\bu_h^n)  
-\frac{1}{2}((\nabla \cdot \bu_h^n)\bu_h^n,\be_r^n)
$$
Now, integrating by parts the first term on the right-hand side above we have
$$
b_h(\bu_h^n,\bu_h^n,\be_r^n)-b_{pod}(\bu_h^n,\bu_h^n,\be_r^n) = \frac{1}{2}(|\bu_h^n|^2,\nabla \cdot \be_r^n) 
-\frac{1}{2}((\nabla \cdot \bu_h^n)\bu_h^n,\be_r^n),
$$
and, since $\nabla\cdot\bu=0$, we can write
\begin{equation}
\label{bosco01}
b_h(\bu_h^n,\bu_h^n,\be_r^n)-b_{pod}(\bu_h^n,\bu_h^n,\be_r^n) = \frac{1}{2}(|\bu_h^n|^2,\nabla \cdot \be_r^n) 
+\frac{1}{2}((\nabla \cdot(\bu^n -\bu_h^n))\bu_h^n,\be_r^n),
\end{equation}
For the second term on the right-hand side of~\eqref{bosco01} we have
\begin{eqnarray}\label{eq:2t3}
\frac{1}{2}((\nabla \cdot(\bu^n -\bu_h^n))\bu_h^n,\be_r^n) &\le&\frac{1}{2}\|\bu_h^n\|_\infty \|\nabla \cdot (\bu_h^n-\bu^n)\|_0\|\be_r^n\|_0\\
&\le&C_{\bu,{\rm inf}}^2\frac{\mu}{4}\|\nabla \cdot (\bu_h^n-\bu^n)\|_0^2+\frac{1}{4\mu}\|\be_r^n\|_0^2,\nonumber
\end{eqnarray}
For the first term on the right-hand side of~\eqref{bosco01} we obtain
\begin{eqnarray}\label{36}
&&\frac{1}{2}(|\bu_h^n|^2,\nabla \cdot \be_r^n)|\le \frac{1}{2}(|\bu_h^n|^2-|\bu^n|^2,\nabla \cdot \be_r^n)|+ \frac{1}{2} |(|\bu^n|^2,\nabla \cdot \be_r^n)|\nonumber\\
&&= \frac{1}{2}|(|\bu_h^n|^2-|\bu^n|^2,\nabla \cdot \be_r^n)|+ \frac{1}{2} |((I-P_{Q_h})|\bu^n|^2,\nabla \cdot \be_r^n)
|\label{ahora_si}\\
&&\le \frac{1}{2} \|\bu^n+\bu_h^n\|_{\infty}\|\bu^n-\bu_h^n\|_0\|\nabla \cdot \be_r^n\|_0+\frac{1}{2}\|(I-P_{Q_h})|\bu^n|^2\|_0\|\nabla \cdot \be_r^n\|_0.\nonumber
\end{eqnarray}
Then, applying \eqref{eq:uh_infty}
\begin{eqnarray}\label{eq:2t4}
&&\frac{1}{2}|(|\bu_h^n|^2,\nabla \cdot \be_r^n)|\le \frac{1}{2}(\|\bu^n\|_\infty+C_{\bu,{\rm inf}})\|\bu^n-\bu_h^n\|_0\|\nabla \cdot \be_r^n\|_0
\nonumber\\
&&\quad+
\frac{1}{2}\|(I-P_{Q_h})|\bu^n|^2\|_0\|\nabla \cdot \be_r^n\|_0\le C\mu^{-1}(\|\bu^n-\bu_h^n\|_0^2+\|(I-P_{Q_h})|\bu^n|^2\|_0^2)\nonumber\\
&&\quad +\frac{\mu}{4}\|\nabla \cdot \be_r^n\|_0^2,
\end{eqnarray}
where the generic constat $C$ above depends on $\|\bu\|_{L^\infty(L^\infty)}$ and $C_{\bu,{\rm inf}}$.
Inserting \eqref{eq:incon11}, \eqref{bosco01}, \eqref{eq:2t3} and \eqref{eq:2t4} into \eqref{eq:incon1} we get
\begin{eqnarray*}
|I|&\le& \left(1+CC_{1,\rm inf}+\frac{1}{4\mu}\right)\|\be_r^n\|_0^2+\frac{\mu}{4}\|\nabla \cdot \be_r^n\|_0^2
\nonumber\\
&&\quad+C( C^2_{\bu,{\rm inf}}+C^2_{\rm ld})\|\nabla \bbeta_h^n\|_0^2
+C_{\bu,{\rm inf}}^2\frac{\mu}{4}\|\nabla \cdot (\bu_h^n-\bu^n)\|_0^2\nonumber\\
&&\quad
+ C\mu^{-1}(\|\bu-\bu_h\|_0^2+\|(I-P_{Q_h})|\bu^n|^2\|_0^2).
\end{eqnarray*}
Including into a generic constant $C$ the dependence on $C_{\bu,{\rm inf}}$ and $C_{\rm ld}$ of the third and fourth terms above 
we may write
\begin{eqnarray}\label{cotaI}
|I|&\le& \left(1+CC_{1,\rm inf}+\frac{1}{4\mu}\right)\|\be_r^n\|_0^2+\frac{\mu}{4}\|\nabla \cdot \be_r^n\|_0^2
\nonumber\\
&&\quad+C\|\nabla \bbeta_h^n\|_0^2
+C{\mu}\|\nabla \cdot (\bu_h^n-\bu^n)\|_0^2\nonumber\\
&&\quad
+ C\mu^{-1}(\|\bu-\bu_h\|_0^2+\|(I-P_{Q_h})|\bu^n|^2\|_0^2).
\end{eqnarray}
We also have
\begin{eqnarray}\label{cotaII}
|II|\le \frac{\nu}{2}\|\nabla \bbeta_h^n\|_0^2+\frac{\nu}{2}\|\nabla \be_r^n\|_0^2,
\end{eqnarray}
and
\begin{eqnarray}\label{cotaIII}
|III|\le \|\nabla \cdot\bbeta_h^n\|_0^2+\frac{\mu}{4}\|\be_r^n\|_0^2.
\end{eqnarray}
Inserting \eqref{cotaI}, \eqref{cotaII} and \eqref{cotaIII} into \eqref{eq:error2} and adding terms we get
\begin{eqnarray}\label{eq:fin1}
&&\|\be_r^n\|_0^2+\nu\sum_{j=1}^n\Delta t \|\nabla \be_r^j\|_0^2
+\mu\sum_{j=1}^n\Delta t\|\nabla \cdot \be_r^j\|_0^2\nonumber\\
&&\le \|\be_r^0\|_0^2+\sum_{j=1}^n\Delta t \left(1+CC_{1,\rm inf}+\frac{1}{4\mu}\right)\|\be_r^j\|_0^2
+C(\nu+\mu)\sum_{j=1}^n\Delta t\|\nabla \bbeta_h^j\|_0^2\nonumber\\
&&\quad +
C\mu\sum_{j=1}^n\Delta t\|\nabla\cdot (\bu_h^j-\bu^j)\|_0^2
+
C\mu^{-1}\sum_{j=1}^n\Delta t\|\bu_h^j-\bu^j\|_0^2\nonumber\\
&&\quad +C\mu^{-1}\sum_{j=1}^n\Delta t \|(I-P_{Q_h})|\bu^j|^2\|_0^2.
\end{eqnarray}
Assuming
\begin{equation}\label{eq_Cu1}
\Delta t C_u:=\Delta t(1+CC_{1,\rm inf}+\frac{1}{4\mu})\le \frac{1}{2}
\end{equation}
and applying Gronwall's Lemma \cite[Lemma 5.1]{hey_ran_IV} and \eqref{eq:cota_pod_0}, 
\eqref{eq:inv_S}, \eqref{eq:cota_grad_div} and \eqref{eq:propre}
we obtain
\begin{eqnarray}\label{eq:error3_b}
&&\|\be_r^n\|_0^2+\nu\sum_{j=1}^n\Delta t \|\nabla \be_r^j\|_0^2+\mu\sum_{j=1}^n\Delta t\|\nabla \cdot \be_r^j\|_0^2\nonumber\\
&&\quad\le e^{2T C_u}\left(\|\be_r^0\|_0^2+CT(\nu+\mu)\|S^v\|_0\sum_{k=r+1}^{d_v}\lambda_k\right.\\
&&\quad \left.
+C(1+\mu^{-1}T)C(\bu,p,l+1)^2 \left(h^{2l}+(\Delta t)^2 \right)+C\mu^{-1}h^{2l}T\||\bu|^2\|_{L^\infty(H^l)}\right).\nonumber
\end{eqnarray}
\begin{remark}
Let us observe that the error terms in the last line of \eqref{eq:error3_b} arise from using different discretization for the nonlinear term in the FOM and the POD methods. We also observe that these errors are of the same size as the error of the FOM method.
\end{remark}
\begin{Theorem}\label{th:prin}
Let $\bu$ be the velocity in the Navier-Stokes equations \eqref{NS}, let $\bu_r$ be the grad-div POD
stabilized approximation defined in \eqref{eq:pod_method2}, assume that the solution $(\bu,p)$ of \eqref{NS} is regular enough
and condition \eqref{eq_Cu1} holds. Then, the error can be bounded as follows
\begin{eqnarray}\label{eq:cota_finalSUPv}
\sum_{j=1}^n\Delta t \|\bu_r^j-\bu^j\|_0^2&\le& 3Te^{2T C_u}\left(\|\be_r^0\|_0^2+CT(\nu+\mu)\|S^v\|_0\sum_{k=r+1}^{d_v}\lambda_k\right.\nonumber\\
&&\quad \left.
+C(1+\mu^{-1}T)C(\bu,p,l+1)^2 \left(h^{2l}+(\Delta t)^2 \right)\right.\nonumber\\
&&\quad\left.+C\mu^{-1}h^{2l}T\||\bu|^2\|_{L^\infty(H^l)}\right)+3T\sum_{k=r+1}^{d_v}\lambda_k\nonumber\\
&&\quad +3T C(\bu,p,l+1)^2 \left(h^{2l}+(\Delta t)^2 \right).
\end{eqnarray}
\end{Theorem}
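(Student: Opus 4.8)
The plan is to reduce everything to the estimate \eqref{eq:error3_b} already obtained for the auxiliary quantity $\be_r^j=\bu_r^j-P_r^v\bu_h^j$, combined with the projection error bound \eqref{eq:cota_pod_0} and the FOM error bound \eqref{eq:cota_grad_div}. First I would split the full error through the triangle inequality by inserting the intermediate quantities $P_r^v\bu_h^j$ and $\bu_h^j$, writing
$$
\bu_r^j-\bu^j=\be_r^j+\bbeta_h^j+(\bu_h^j-\bu^j),
$$
and then use the elementary inequality $(a+b+c)^2\le 3(a^2+b^2+c^2)$ to obtain
$$
\|\bu_r^j-\bu^j\|_0^2\le 3\|\be_r^j\|_0^2+3\|\bbeta_h^j\|_0^2+3\|\bu_h^j-\bu^j\|_0^2.
$$
Multiplying by $\Delta t$ and summing over $j=1,\dots,n$ then leaves three sums to control separately, each of which I expect to match one block of \eqref{eq:cota_finalSUPv}.

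For the first sum I would observe that the right-hand side of \eqref{eq:error3_b} is independent of $n$ (its $n$-dependence having been absorbed into $T$ via Gronwall's lemma), so it bounds $\|\be_r^j\|_0^2$ uniformly in $j$. Since $\sum_{j=1}^n\Delta t\le n\Delta t\le T$, this yields that $\sum_{j=1}^n\Delta t\|\be_r^j\|_0^2$ is at most $T$ times the bracket appearing in \eqref{eq:error3_b}, which reproduces the first three lines of \eqref{eq:cota_finalSUPv} after multiplication by the factor $3$.

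For the second sum I would use that $P_r^v$ is the $L^2$-orthogonal projection onto ${\cal \bU}^r$, so that $\bbeta_h^j=P_r^v\bu_h^j-\bu_h^j$ is exactly the truncation error $\bu_h^j-\sum_{k=1}^r(\bu_h^j,\bvar_k)\bvar_k$ that appears in \eqref{eq:cota_pod_0}. Hence
$$
\sum_{j=1}^n\Delta t\,\|\bbeta_h^j\|_0^2\le \Delta t\sum_{j=0}^M\|\bbeta_h^j\|_0^2\le \Delta t\,M\sum_{k=r+1}^{d_v}\lambda_k=T\sum_{k=r+1}^{d_v}\lambda_k,
$$
giving the term $3T\sum_{k=r+1}^{d_v}\lambda_k$. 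For the third sum I would square the FOM bound \eqref{eq:cota_grad_div}, which gives $\|\bu_h^j-\bu^j\|_0^2\le C(\bu,p,l+1)^2(h^l+\Delta t)^2$ uniformly in $j$, absorb the resulting factor into the constant $C$, and again use $\sum_{j=1}^n\Delta t\le T$ to arrive at $3T\,C(\bu,p,l+1)^2\bigl(h^{2l}+(\Delta t)^2\bigr)$, the last line of \eqref{eq:cota_finalSUPv}. Collecting the three contributions gives exactly \eqref{eq:cota_finalSUPv}.

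There is no genuine analytic obstacle in this final step: the heavy lifting, namely the nonlinear-term consistency estimate for $I$ and the Gronwall argument that produced \eqref{eq:error3_b}, has already been carried out. The only points demanding care are bookkeeping ones, specifically recognizing that the POD estimate \eqref{eq:cota_pod_0} applies verbatim to $\bbeta_h^j$ precisely because $P_r^v$ is the orthogonal projection, and keeping the factor-$3$ splitting together with the $\sum_{j=1}^n\Delta t\le T$ factors consistent so that the constants emerge as stated. One should also note that \eqref{eq:cota_pod_0} sums from $j=0$, which harmlessly dominates the sum from $j=1$ used here.
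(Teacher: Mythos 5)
Your proposal is correct and follows essentially the same route as the paper: the triangle-inequality splitting through $P_r^v\bu_h^j$ and $\bu_h^j$ with the factor $3$, the bound $\sum_{j=1}^n\Delta t\,\|\be_r^j\|_0^2\le T\max_j\|\be_r^j\|_0^2$ combined with \eqref{eq:error3_b}, and the use of \eqref{eq:cota_pod_0} and \eqref{eq:cota_grad_div} for the remaining two sums. The bookkeeping (in particular $\Delta t\,M=T$ in the POD truncation term) is handled exactly as in the paper's proof.
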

\begin{proof}
Since $\sum_{j=1}^n\Delta t \|\be_r^j\|_0^2\le T \max_{1\le j\le n}\|\be_r^j\|_0^2$ and
\begin{eqnarray*}\label{eq:lauso}
\sum_{j=1}^n\Delta t \|\bu_r^j-\bu^j\|_0^2&\le&
3\left(\sum_{j=1}^n\Delta t \|\be_r^j\|_0^2+\sum_{j=1}^n\Delta t\|P_r^v \bu_h^j-\bu_h^j)\|_0^2\right.\nonumber\\
&&\quad\left.+\sum_{j=1}^n\Delta t\| \bu_h^j-\bu^j\|_0^2\right),
\end{eqnarray*}
 from  \eqref{eq:error3_b}, \eqref{eq:cota_pod_0} and \eqref{eq:cota_grad_div}
we easily obtain \eqref{eq:cota_finalSUPv}.
\end{proof}
\begin{remark}
We have chosen, as in \cite{archiv_incon}, to analyze the concrete case in which the divergence form is used for the FOM method
and the EMAC form is applied for the POD method but the error analysis of any other combination of discretizations
of the nonlinear terms with analogous properties could be
carried out in a similar way.
\end{remark}
\begin{remark}
With the POD method defined in \eqref{eq:pod_method2} we can obtain an approximation to the velocity but not to the pressure. In case an approximation to the pressure is also required one can use a supremizer pressure recovery method. This procedure is analyzed in \cite{novo_rubino}, see also \cite{schneier}. One can argue exactly as in \cite[Theorem 5.4]{novo_rubino} to get an error bound for the pressure, in which, as in Theorem \ref{th:prin}, one will have to add to the error terms in \cite[Theorem 5.4]{novo_rubino} those coming from the different discretizations used in the nonlinear terms (i.e. fourth and fifth terms on the right-hand side of \eqref{cotaI}). 
\end{remark}
\subsection{Error analysis of the method without grad-div stabilization}
In this section we consider the case analyzed in \cite{archiv_incon} in which the plain Galerkin method is used instead of the grad-div stabilized method, i.e, we take $\mu=0$ both in equations \eqref{eq:gal_grad_div2} and \eqref{eq:pod_method2}.

 In the following error analysis, as in \cite{archiv_incon}, one does not get error bounds independent on inverse powers of the viscosity.
 The analysis can be obtained with slight modifications of the analysis of the previous section.

First, we notice that the last terms on each side of identity~\eqref{eq:er1} are not present. Then, applying \eqref{diver_vol} in~\eqref{36} to bound $\|\nabla \cdot \be_r^n\|_0$ by~$\|\nabla \be_r^n\|_0$, instead of~\eqref{eq:2t4} we get
\begin{eqnarray}\label{eq:2t4_b}
&&\frac{1}{2}|(|\bu_h^n|^2,\nabla \cdot \be_r^n)|\le \frac{1}{2}(\|\bu^n\|_\infty+C_{\bu,{\rm inf}})\|\bu^n-\bu_h^n\|_0\|\nabla \cdot \be_r^n\|_0
\nonumber\\
&&\quad+
\frac{1}{2}\|(I-P_{Q_h})|\bu^n|^2\|_0\|\nabla \cdot \be_r^n\|_0\le C\nu^{-1}(\|\bu^n-\bu_h^n\|_0^2+\|(I-P_{Q_h})|\bu^n|^2\|_0^2)\nonumber\\
&&\quad +\frac{\nu}{4}\|\nabla \cdot \be_r^n\|_0^2.
\end{eqnarray}
Consequently, instead of
\eqref{cotaI} we obtain
\begin{eqnarray*}
|I|&\le& \left(1+CC_{1,\rm inf}+\frac{1}{4}\right)\|\be_r^n\|_0^2+\frac{\nu}{4}\|\nabla \be_r^n\|_0^2
\nonumber\\
&&\quad+C\|\nabla \bbeta_h^n\|_0^2
+C\|\nabla \cdot (\bu_h^n-\bu^n)\|_0^2\nonumber\\
&&\quad
+ C\nu^{-1}(\|\bu-\bu_h\|_0^2+\|(I-P_{Q_h})|\bu^n|^2\|_0^2).
\end{eqnarray*}
Instead of \eqref{cotaII} we can write
\begin{eqnarray*}
|II|\le {\nu}\|\nabla \bbeta_h^n\|_0^2+\frac{\nu}{4}\|\nabla \be_r^n\|_0^2,
\end{eqnarray*}
to conclude the following inequality instead of~\eqref{eq:fin1}, 
\begin{eqnarray}
\label{otra_mas}
&&\|\be_r^n\|_0^2+\nu\sum_{j=1}^n\Delta t \|\nabla \be_r^j\|_0^2\nonumber\\
&&\le \|\be_r^0\|_0^2+\sum_{j=1}^n\Delta t \left(5/4+CC_{1,\rm inf}\right)\|\be_r^j\|_0^2
+C\nu\sum_{j=1}^n\Delta t\|\nabla \bbeta_h^j\|_0^2
\\
&&\quad +
C\sum_{j=1}^n\Delta t\|\nabla\cdot (\bu_h^j-\bu^j)\|_0^2
+
C\nu^{-1}\sum_{j=1}^n\Delta t\|\bu_h^j-\bu^j\|_0^2\nonumber\\
&&\quad +C\nu^{-1}\sum_{j=1}^n\Delta t \|(I-P_{Q_h})|\bu^j|^2\|_0^2.\nonumber
\end{eqnarray}
From the above inequality one can argue exactly as before to conclude that, if 
\begin{equation}\label{eq:sin_esta}
\Delta t C_u:=\Delta t \left(5/4+CC_{1,\rm inf}\right)\le \frac{1}{2},
\end{equation}
then, the following bound holds
\begin{eqnarray}\label{eq:error3_bb}
&&\|\be_r^n\|_0^2+\nu\sum_{j=1}^n\Delta t \|\nabla \be_r^j\|_0^2\le e^{2T C_u}\left(\|\be_r^0\|_0^2+CT\nu\|S^v\|_0\sum_{k=r+1}^{d_v}\lambda_k\right.\nonumber\\
&&\quad \left.
+CT(h^{-2}+\nu^{-1})C(\bu,p,\nu^{-1},l+1)^2 \left(h^{2l+2}+(\Delta t)^2 \right)\right.\nonumber\\
&&\quad\left.+C\nu^{-1}h^{2l}T\||\bu|^2\|_{L^\infty(H^l)}\right),
\end{eqnarray}
where we have applied \eqref{diver_vol} to bound the fourth term on the right-hand side~of~\eqref{otra_mas} and the error bound of the plain Galerkin method \eqref{eq:cota_sin_grad_div}.
From \eqref{eq:error3_bb} and arguing as before we conclude
\begin{Theorem}\label{th:prinb}
Let $\bu$ be the velocity in the Navier-Stokes equations \eqref{NS}, let $\bu_r$ be the grad-div POD approximation without stabilization (case $\mu=0$) and assume that the solution $(\bu,p)$ of \eqref{NS} is regular enough. Then, assuming condition \eqref{eq:sin_esta} holds, the error
can be bounded as follows
\begin{eqnarray}\label{eq:cota_finalSUPvb}
\sum_{j=1}^n\Delta t \|\bu_r^j-\bu^j\|_0^2&\le& 3Te^{2T C_u}\left(\|\be_r^0\|_0^2+CT\nu\|S^v\|_0\sum_{k=r+1}^{d_v}\lambda_k\right.\nonumber\\
&&\quad \left.
+CT(h^{-2}+\nu^{-1})C(\bu,p,\nu^{-1}, l+1)^2 \left(h^{2l+2}+(\Delta t)^2 \right)\right.\nonumber\\
&&\quad\left.+C\nu^{-1}h^{2l}T\||\bu|^2\|_{L^\infty(H^l)}\right)+3T\sum_{k=r+1}^{d_v}\lambda_k\nonumber\\
&&\quad +3T C(\bu,p,\nu^{-1},l+1)^2 \left(h^{2l+2}+(\Delta t)^2 \right).
\end{eqnarray}
\end{Theorem}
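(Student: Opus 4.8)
The plan is to reproduce, almost verbatim, the argument of Theorem~\ref{th:prin}, now starting from the energy estimate \eqref{eq:error3_bb} established for the case $\mu=0$ and replacing the grad-div FOM bound \eqref{eq:cota_grad_div} by the plain Galerkin bound \eqref{eq:cota_sin_grad_div}. All the substantive work has in fact already been carried out in deriving \eqref{eq:error3_bb}; what remains is an elementary triangle-inequality decomposition together with the bookkeeping of the resulting terms.

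First I would split the total error as
\begin{eqnarray*}
\bu_r^j-\bu^j=\be_r^j+(P_r^v\bu_h^j-\bu_h^j)+(\bu_h^j-\bu^j),
\end{eqnarray*}
and apply the elementary inequality $(a+b+c)^2\le 3(a^2+b^2+c^2)$ to obtain
\begin{eqnarray*}
\sum_{j=1}^n\Delta t\|\bu_r^j-\bu^j\|_0^2
\le 3\sum_{j=1}^n\Delta t\left(\|\be_r^j\|_0^2+\|P_r^v\bu_h^j-\bu_h^j\|_0^2+\|\bu_h^j-\bu^j\|_0^2\right).
\end{eqnarray*}
I would then estimate the three time-weighted sums separately. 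For the first, I would use $\sum_{j=1}^n\Delta t\|\be_r^j\|_0^2\le T\max_{1\le j\le n}\|\be_r^j\|_0^2$ and control the maximum by \eqref{eq:error3_bb}; this produces the first three lines on the right-hand side of \eqref{eq:cota_finalSUPvb}. For the projection term I would invoke \eqref{eq:cota_pod_0}: since $P_r^v\bu_h^j-\bu_h^j$ is precisely the quantity bounded there, multiplying by $\Delta t$ and using $M\Delta t=T$ gives $\sum_{j=1}^n\Delta t\|P_r^v\bu_h^j-\bu_h^j\|_0^2\le T\sum_{k=r+1}^{d_v}\lambda_k$, which yields the term $3T\sum_{k=r+1}^{d_v}\lambda_k$. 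For the FOM term I would square the plain Galerkin bound \eqref{eq:cota_sin_grad_div} and sum with the $\Delta t$ weights to obtain $3TC(\bu,p,\nu^{-1},l+1)^2(h^{2l+2}+(\Delta t)^2)$, the last line of \eqref{eq:cota_finalSUPvb}.

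Collecting the three contributions gives exactly \eqref{eq:cota_finalSUPvb}. There is essentially no obstacle in this final step: it is identical in structure to the proof of Theorem~\ref{th:prin}, the only change being that the plain Galerkin bound \eqref{eq:cota_sin_grad_div}, with its $h^{l+1}$ velocity rate and its explicit dependence on inverse powers of $\nu$, is used in place of the $\nu$-uniform grad-div bound \eqref{eq:cota_grad_div}. The genuinely delicate part lies upstream, in the derivation of \eqref{eq:error3_bb}: there, the absence of grad-div stabilization forces one to absorb $\|\nabla\cdot\be_r^n\|_0$ into the viscous term via \eqref{diver_vol}, and to control $\|\nabla\cdot(\bu_h^n-\bu^n)\|_0$ through the $h$-weighted gradient estimate in \eqref{eq:cota_sin_grad_div}, which is what generates the $h^{-2}$ and $\nu^{-1}$ factors appearing in \eqref{eq:error3_bb} and hence in \eqref{eq:cota_finalSUPvb}.
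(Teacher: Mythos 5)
Your proposal is correct and follows exactly the paper's own route: the paper proves Theorem~\ref{th:prinb} by the phrase ``from \eqref{eq:error3_bb} and arguing as before,'' meaning precisely the three-term triangle-inequality decomposition of $\bu_r^j-\bu^j$ used in Theorem~\ref{th:prin}, with the first sum controlled by $T\max_j\|\be_r^j\|_0^2$ and \eqref{eq:error3_bb}, the projection sum by \eqref{eq:cota_pod_0}, and the FOM sum by the plain Galerkin bound \eqref{eq:cota_sin_grad_div} in place of \eqref{eq:cota_grad_div}. Your accounting of where the $h^{-2}$ and $\nu^{-1}$ factors originate is also consistent with the paper's derivation of \eqref{eq:error3_bb}.
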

The final error bound
depends explicitly on $\nu^{-1}$ and on both the $L^2$ error of the FOM velocity approximation and the
error in the divergence of the FOM approximation (that we have bounded by the $H^1$ error). 
\begin{remark}
Alternatively, we can also bound the method without grad-div stabilization in a way that inverse constants of the viscosity do not appear
explicitly in the constants. However, the error depends on the error of the plain Galerkin method \eqref{eq:cota_sin_grad_div} which in terms depend on $\nu^{-1}$.

To this end, instead of \eqref{36}, we can write, integrating by parts
\begin{eqnarray*}
&&\frac{1}{2}(|\bu_h^n|^2,\nabla \cdot \be_r^n)|\le \frac{1}{2}(|\bu_h^n|^2-|\bu^n|^2,\nabla \cdot \be_r^n)|+ \frac{1}{2} |(|\bu^n|^2,\nabla \cdot \be_r^n)|\nonumber\\
&&= \frac{1}{2}|(\nabla\left(|\bu_h^n|^2-|\bu^n|^2\right),\be_r^n)|+ \frac{1}{2} |\nabla\left((I-P_{Q_h})|\bu^n|^2\right),\be_r^n)
|\\
&&\le C \|\bu^n+\bu_h^n\|_{1,\infty}\|\bu^n-\bu_h^n\|_1\|\be_r^n\|_0+\frac{1}{2}\|(I-P_{Q_h})|\bu^n|^2\|_1\|\be_r^n\|_0\nonumber\\
&&\le C \left(\|\bu^n-\bu_h^n\|_1^2+\|(I-P_{Q_h})|\bu^n|^2\|_1^2\right)+\frac{1}{2}\|\be_r^n\|_0^2.
\end{eqnarray*}
Then, applying \eqref{diver_vol}, instead of \eqref{cotaI} we get
\begin{eqnarray*}
|I|&\le& \left(1+CC_{1,\rm inf}\right)\|\be_r^n\|_0^2+C\|\nabla \bbeta_h^n\|_0^2
+C\|\bu_h^n-\bu^n\|_1^2\nonumber\\
&&\quad
+ C\|(I-P_{Q_h})|\bu^n|^2\|_1^2),
\end{eqnarray*}
and then, for $C_u=1+CC_{1,\rm inf}$ and $\Delta t C_u\le 1/2$,  we conclude
\begin{eqnarray*}
&&\|\be_r^n\|_0^2+\nu\sum_{j=1}^n\Delta t \|\nabla \be_r^j\|_0^2\le e^{2T C_u}\left(\|\be_r^0\|_0^2+CT\nu\|S^v\|_0\sum_{k=r+1}^{d_v}\lambda_k\right.\\
&&\quad \left.
+CTC(\bu,p,\nu^{-1},l+1)^2 h^{-2}\left(h^{2l+2}+(\Delta t)^2 \right)\right.\\
&&\quad\left.+Ch^{2(l-1)}T\||\bu|^2\|_{L^\infty(H^l)}\right).\nonumber
\end{eqnarray*}
As pointed out in \cite{review}, from the above error bound we observe that the grad-div stabilization could be suppressed when we use the EMAC form of the nonlinear
term in the ROM method and we can still obtain error bounds with constants independent of inverse powers of $\nu$, apart
from the dependence thought the error of the plain Galerkin method. However, the price to be paid is a lower rate of convergence. Compare the
last term on the right-hand side above with \eqref{eq:cota_finalSUPv}. The rate of convergence obtained with
this argument is also lower than the bound \eqref{eq:cota_finalSUPvb}.
\end{remark}


\section{Numerical experiments}\label{sec:num}

In this section, we present numerical results for the grad-div reduced order model (ROM) \eqref{eq:pod_method2}, introduced and analyzed in the previous section. Actually, a comparison is performed by using both the skew-symmetric \eqref{nonli_div} and the EMAC \eqref{nonli_emac} form for the discretization of the nonlinear term at ROM level. The numerical experiments are performed on the benchmark problem of the 2D unsteady flow around a cylinder with circular cross-section \cite{SchaferTurek96} at Reynolds number $Re=100$ ($\nu=10^{-3}\,\rm{m^{2}/s}$). The open-source FE software FreeFEM \cite{Hecht12} has been used to run the numerical experiments, following the setup from \cite{novo_rubino}.

\medskip 

{\em FOM and POD modes.}
The numerical method used to compute the snapshots for the grad-div-ROM \eqref{eq:pod_method2} is the grad-div finite element method (FEM) \eqref{eq:gal_grad_div} described in Section \ref{sec:PN}, with the skew-symmetric form \eqref{nonli_div} of the nonlinear term. A spatial discretization using the Hood--Taylor MFE pair ${\bf P}^{2}-\mathbb{P}^1$ for velocity-pressure is considered on a relatively coarse computational grid (see \cite{novo_rubino}), for which $h = 2.76\cdot 10^{-2}\,\rm{m}$, resulting in $32\,488$ d.o.f. for velocities and $4\,151$ d.o.f. for pressure. 

As in \cite{novo_rubino}, for the time discretization, a semi-implicit Backward Differentiation Formula of order two (BDF2) has been applied (see \cite{AhmedRubino19} for further details), with time step $\Delta t = 2\cdot 10^{-3}\,\rm{s}$. 
Time integration is performed until a final time $T=7\,\rm{s}$. In the time period $[0,5]\,\rm s$, after an initial spin-up, the flow is expected to develop to full extent, including a subsequent relaxation time. Afterwards, it reaches a periodic-in-time (statistically- or quasi-steady) state.

The POD modes are generated in $L^2$ by the method of snapshots with velocity centered-trajectories \cite{IliescuJohn15} by storing every FOM solution from $t=5\,\rm{s}$, when the solution had reached a periodic-in-time state, and just using one period of snapshot data. The full period length of the statistically steady state is $0.332\,\rm{s}$, thus we collect $167$ velocity snapshots to generate the POD basis ${\cal \bU}^r$. 

\medskip 

{\em Numerical results for grad-div-ROM with different discretization of the nonlinear term.}
With POD velocity modes generated, the fully discrete grad-div-ROM \eqref{eq:pod_method2} is constructed as discussed in the previous section, using the semi-implicit BDF2 time scheme as for the FOM, and run with both skew-symmetric (as for the FOM) and EMAC (different from the FOM) formulation of the nonlinear term in the stable response time interval $[5,7]\,\rm{s}$ with $\Delta t=2\cdot 10^{-3}\,\rm{s}$. 
The initial reduced-order velocity is given by the $L^2$-orthogonal projection $P_r^v$ of the velocity snapshot at $t=5\,\rm{s}$ on the POD velocity space ${\cal \bU}^r$. 
%

First of all, to assess on the one hand the behavior of the proposed grad-div-ROM \eqref{eq:pod_method2} with both skew-symmetric and EMAC form of the nonlinear term, and illustrate on the other hand the theoretical convergence order predicted by the numerical analysis performed in section \ref{sec:NA}, we plot the discrete $\ell^2(L^2)$ squared errors in velocity with respect to the grad-div-FEM \eqref{eq:gal_grad_div} solution. In particular, in Figure \ref{fig:plotl2L2Err1stPer} we show the errors in the reconstructive time interval $[5,5.332]\,\rm{s}$ (used to compute the snapshots and generate the POD modes) in terms of $r$ (number of POD modes). The theoretical analysis proved that, for sufficiently small $h$ and $\Delta t$ as it is the case, the velocity error should scale as $\Lambda_r=\sum_{k=r+1}^{d_v}\lambda_k$ (see result \eqref{eq:cota_finalSUPv}), and this is recovered in Figure \ref{fig:plotl2L2Err1stPer} for both discretizations of the nonlinear term. Following the hints given by the error bound \eqref{eq:cota_finalSUPv}, for the current setup for which $S^v_2=\| S^v\|_0 = 2.17 \cdot  10^2$, we actually found that $S^v_2\Lambda_r$ is an upper bound for the velocity error. Also, we note that when using a different online discretization of the nonlinear term (EMAC) with respect to the offline phase (skew), the ROM error is barely affected, as suggested by the theoretical analysis.

\begin{figure}[htb]
\begin{center}
\includegraphics[width=4.75in]{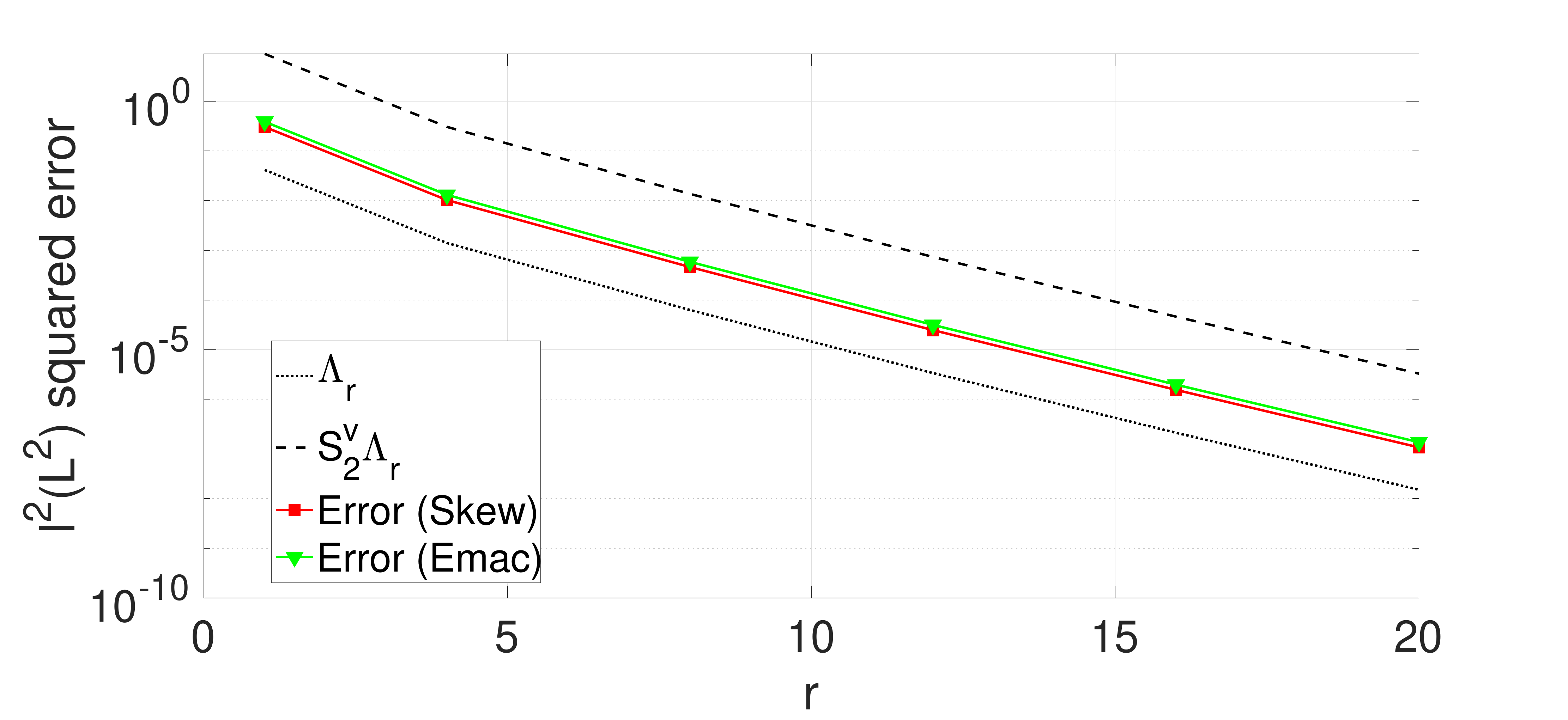}
\caption{Discrete $\ell^2(L^2)$ squared error in velocity with respect to grad-div-FEM \eqref{eq:gal_grad_div} computed with different online discretizations of the nonlinear term in the reconstructive time interval $[5,5.332]\,\rm{s}$.}\label{fig:plotl2L2Err1stPer}
\end{center}
\end{figure}

In Figure \ref{fig:plotl2L2Err}, we show the errors in the stable response time interval $[5,7]\,\rm{s}$. Thus, we are actually testing the ability of the skew-symmetric and EMAC grad-div-ROMs to predict/extrapolate in time, since we are monitoring their performance over an interval six times larger than the one used to compute the snapshots and generate the POD modes. In this case (not contemplated by the analysis), we notice that the errors increase and, for $r>12$, we observe a flattening effect due to the fact that the time interval $[5,7]\,\rm{s}$ is already quite
large with respect to the time period used to generate the POD basis. Thus, although we increase the number of POD modes, we do not notice so much the error decrease. In any case, for small $r$, the theoretical rate of convergence is recovered, and again very similar results are obtained for both online discretizations of the nonlinear term.

\begin{figure}[htb]
\begin{center}
\includegraphics[width=4.75in]{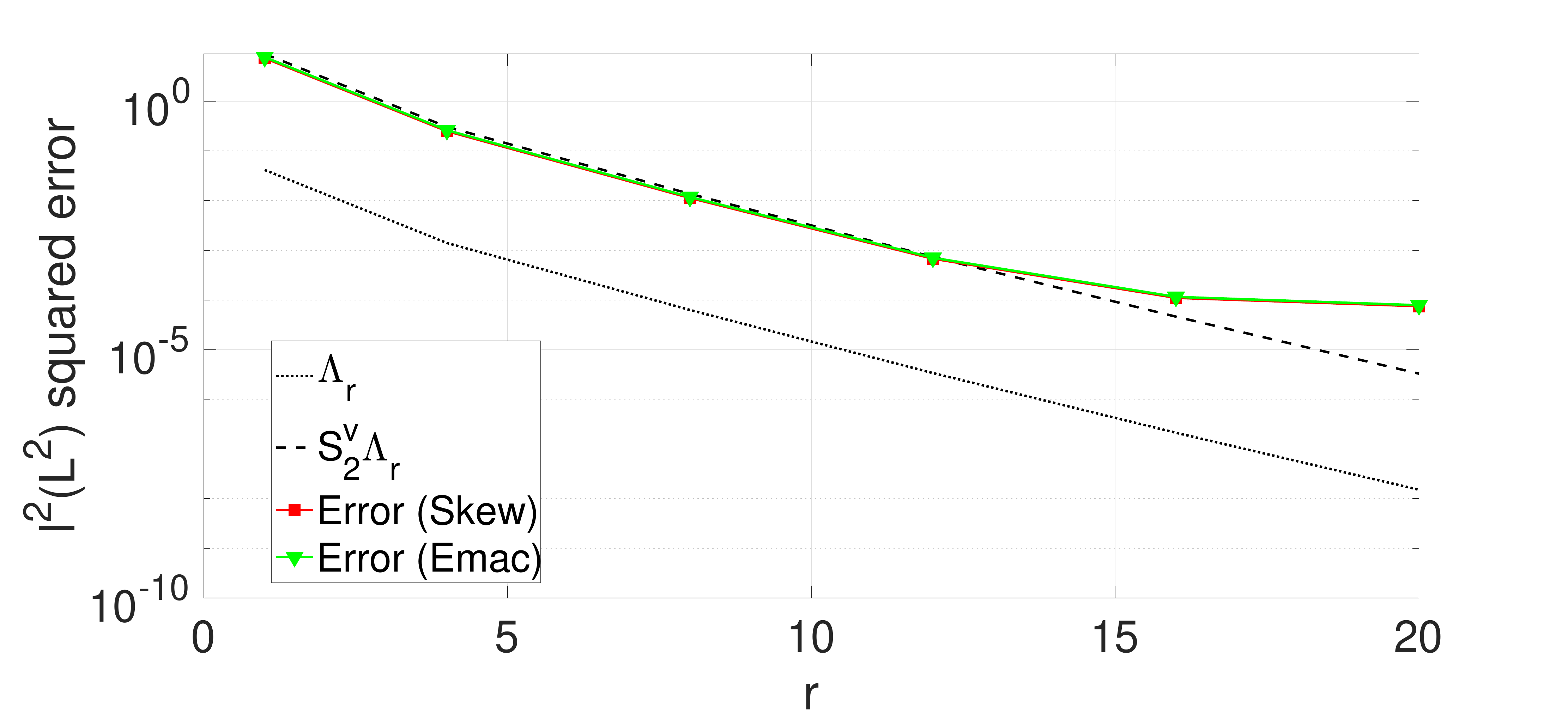}
\caption{Discrete $\ell^2(L^2)$ squared error in velocity with respect to grad-div-FEM \eqref{eq:gal_grad_div} computed with different online discretizations of the nonlinear term in the whole time interval $[5,7]\,\rm{s}$.}\label{fig:plotl2L2Err}
\end{center}
\end{figure}

To further assess the numerical accuracy of the grad-div-ROM \eqref{eq:pod_method2} with both skew-symmetric and EMAC form of the nonlinear term, the temporal evolution of the local drag and lift coefficients, and kinetic energy are monitored and compared to the FOM solutions in the stable response time interval $[5,7]\,\rm{s}$. To compute drag and lift coefficients, we used the volume integral formulation from \cite{John04paper}, where the pressure term is not
necessary if the test functions are taken properly in the discrete divergence-free space ${\boldsymbol V}_{h,l}$ \eqref{eq:V} (by Stokes projection, for instance), as done in \cite{pod_da_nos,zerfas_et_al}.

Numerical results for drag and lift predictions using $r=8$ velocity modes in the whole time interval $[5,7]\,\rm{s}$ are shown in Figures \ref{fig:DragCoefCompl}-\ref{fig:LiftCoefCompl}, where we display a comparison of grad-div-FEM \eqref{eq:gal_grad_div} with the skew-symmetric form of the nonlinear term, and grad-div-ROM \eqref{eq:pod_method2} with both skew-symmetric and EMAC form of the nonlinear term. 

From these figures, we see that both online discretizations of the nonlinear term agree reasonably well with the FOM data, although using a different formulation of the nonlinear term (EMAC) leads to a slightly inaccurate decrease of the drag coefficient as time increases. Similar conclusions can be obtained from Figure \ref{fig:LiftCoefCompl} for the lift coefficient. The good match that we observe using the same discretization for the nonlinear term as for the FOM (skew) is gradually lost using a different formulation of the nonlinear term (EMAC) in the online phase, which leads to a slightly inaccurate decrease of the lift coefficient too as time increases.

\begin{figure}[htb]
\begin{center}
\includegraphics[width=4.75in]{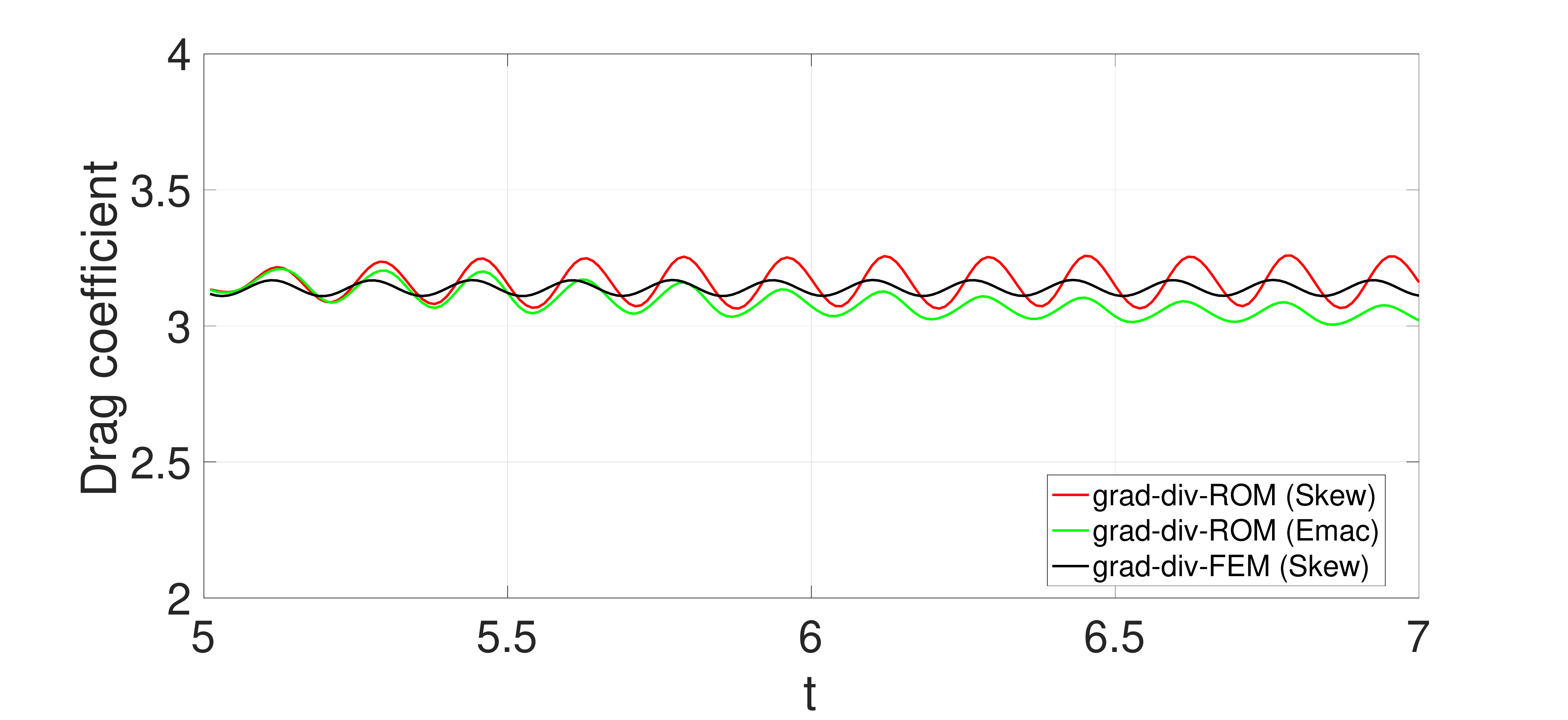}
\caption{Temporal evolution of drag coefficient computed with grad-div-ROM \eqref{eq:pod_method2} using $r=8$ velocity modes. Comparison with grad-div-FEM \eqref{eq:gal_grad_div} in the whole time interval $[5,7]\,\rm{s}$.}\label{fig:DragCoefCompl}
\end{center}
\end{figure}

\begin{figure}[htb]
\begin{center}
\includegraphics[width=4.75in]{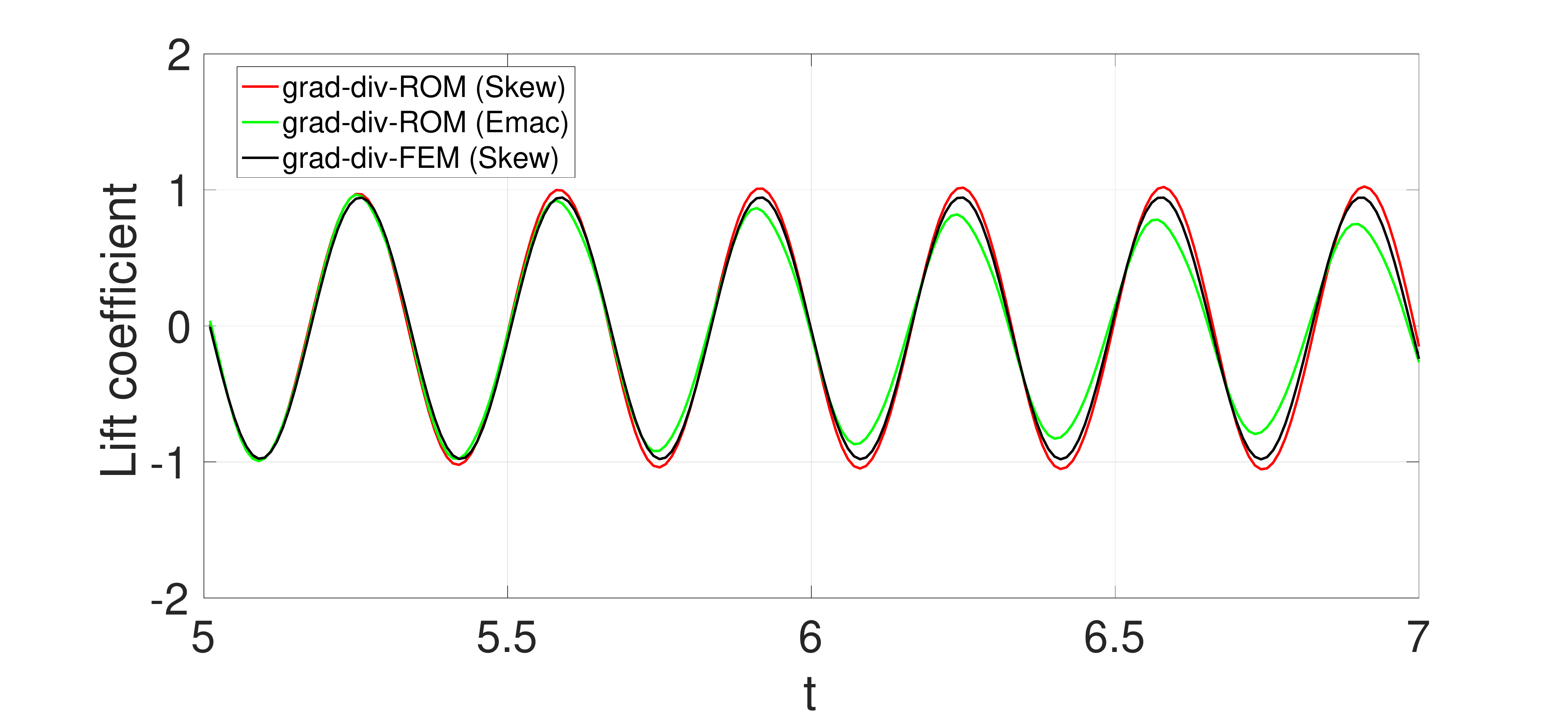}
\caption{Temporal evolution of lift coefficient computed with grad-div-ROM \eqref{eq:pod_method2} using $r=8$ velocity modes. Comparison with grad-div-FEM \eqref{eq:gal_grad_div} in the whole time interval $[5,7]\,\rm{s}$.}\label{fig:LiftCoefCompl}
\end{center}
\end{figure}

In Figure \ref{fig:EkinCompl}, we show the temporal evolution of the kinetic energy using $r=8$ velocity modes for the different grad-div-ROMs compared to grad-div-FEM in the whole time interval $[5,7]\,\rm{s}$. We observe that when considering in the ROM the skew-symmetric form of the nonlinear term (as for the FOM), the kinetic energy is maintained stably around the FOM values, while a slightly inaccurate decrease of the kinetic energy is observed with a different online formulation of the nonlinear term (EMAC) in the predictive time interval $[5.332,7]\,\rm{s}$. To sum up, using the same discretization of the nonlinear term in the ROM as in the FOM seems to perform slightly better for the considered quantities of interest, providing longer time stability and accuracy, while having very little effect on global error evaluations shown above, as predicted by the numerical analysis.

\begin{figure}[htb]
\begin{center}
\includegraphics[width=4.75in]{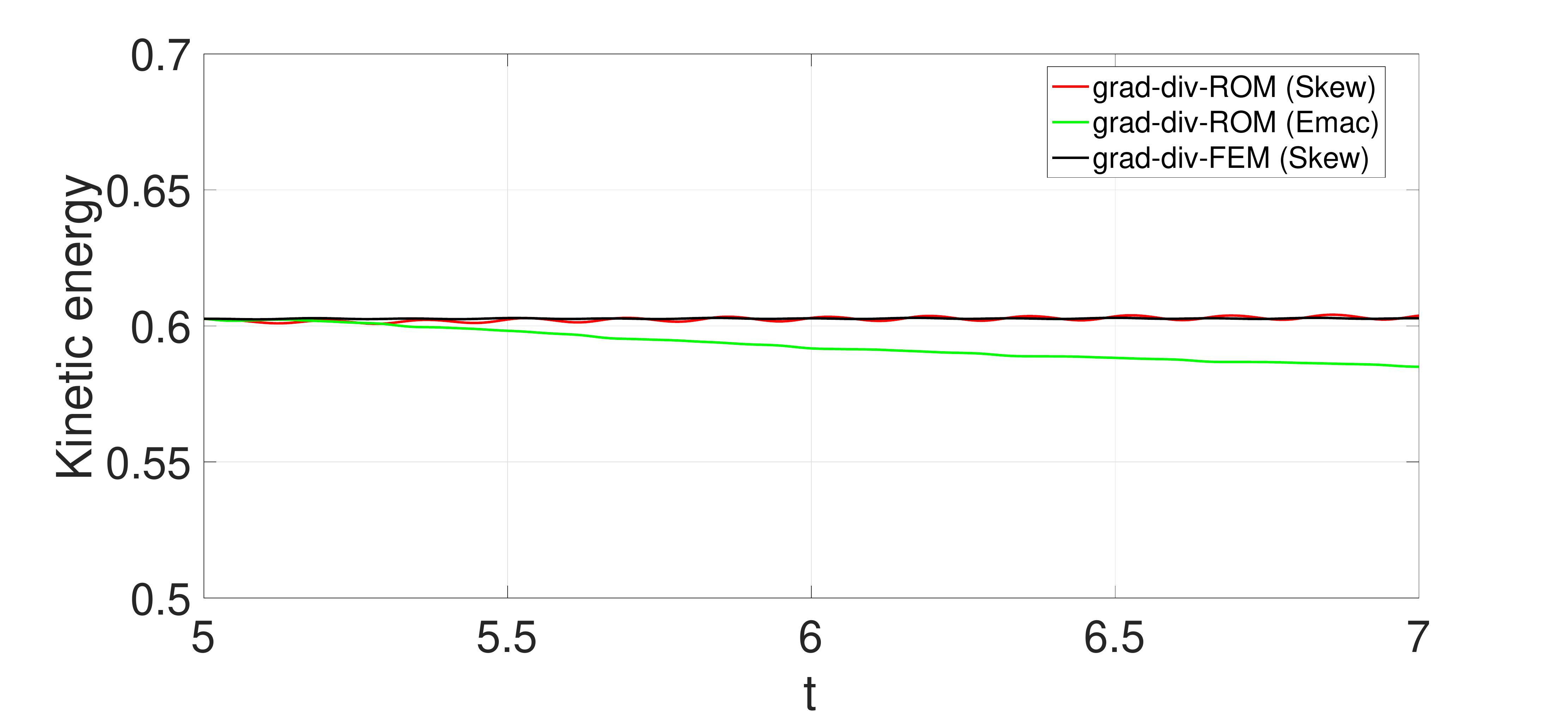}
\caption{Temporal evolution of kinetic energy computed with grad-div-ROM \eqref{eq:pod_method2} using $r=8$ velocity modes. Comparison with grad-div-FEM \eqref{eq:gal_grad_div} in the whole time interval $[5,7]\,\rm{s}$.}\label{fig:EkinCompl}
\end{center}
\end{figure}


\section{Conclusions}
As a conclusion, we can say that there is no inconsistency in the error bounds using different discretization for the nonlinear terms in the FOM and POD methods. In the case in which grad-did stabilization is added to both the FOM and POD methods error bounds with constants independent of inverse powers of the viscosity can be obtained. Comparing the case in which the same discretization of the nonlinear term is used for both FOM and POD methods we have checked that there is an added term in the error that has the size of the error of the FOM method
(in the $L^2$ norm of the velocity and the $L^2$ norm of the divergence of the velocity). 

On the other hand, in the case in which no stabilization is added neither to the FOM nor to the POD method, we have carried out two different error analysis. In the first one, the
error bounds depend on inverse powers of the viscosity and on the $L^2$ error of the velocity
and the $L^2$ error of the divergence of the velocity of the FOM method plus the $L^2$ error of the projection
of the square modulus of the velocity onto the FEM pressure space. In the second one, we are able to prove error bounds with constants
independent of inverse powers of the viscosity but with the price of having bounds that depend on the $H^1$ norm of the FEM velocity error
and the $H^1$ error of the projection
of the square modulus of the velocity onto the FEM pressure space. Moreover, since the bounds for the plain Galerkin method depend on inverse powers of the viscosity the dependence on $\nu^{-1}$ cannot completely be avoided also in this case. 

Overall, we have identified all the terms in the error bounds
for different combinations of discretizations of the nonlinear term adding or not grad-div stabilization.

\end{document}